% This is samplepaper.tex, a sample chapter demonstrating the
% LLNCS macro package for Springer Computer Science proceedings;
% Version 2.20 of 2017/10/04
%
\documentclass[runningheads]{llncs}
\usepackage{graphicx}
% Used for displaying a sample figure. If possible, figure files should
% be included in EPS format.
%
% If you use the hyperref package, please uncomment the following line
% to display URLs in blue roman font according to Springer's eBook style:
%\renewcommand\UrlFont{\color{blue}\rmfamily}

\usepackage[english]{babel}
\usepackage[utf8]{inputenc}
\usepackage{typearea}
\usepackage{latexsym}
\usepackage{lineno,hyperref}
\usepackage{amsmath,amssymb,mathtools}
\usepackage{algorithm}
\usepackage{algorithmic}
\usepackage{graphicx}
\usepackage[abs]{overpic}

\newcommand{\R}{\mathbb{R}}

\newcommand{\cc}{\tilde{c}}
\def\M{\mathcal{M}}

\newcommand{\Sx}{\mathcal{S}}
\newcommand{\Dx}{\mathcal{D}}

\newcommand{\df}{Diff^+}

\newcommand{\ga}{\gamma}

\newcommand{\vphi}{\varphi}

\newcommand*{\quark}{\setbox0\hbox{$x$}\hbox to\wd0{\hss$\cdot$\hss}}

\usepackage{xcolor}
\newcommand{\todo}[1]{\bgroup\color{red}#1\egroup}

%\newcommand{\rev}[1]{{\leavevmode\color{blue}#1}}

%\smartqed 
%\typearea{17}
%\linenumbers
\begin{document}
\title{Elastic Analysis of Augmented Curves and Constrained Surfaces}
%\title{Elastic Analysis of Augmented Curves and Constrained Surfaces\thanks{Supported by organization x.}}\textbf{}
%
%\titlerunning{Abbreviated paper title}
% If the paper title is too long for the running head, you can set
% an abbreviated paper title here
%
\author{Esfandiar Nava-Yazdani\orcidID{0000-0003-4895-739X} %\and
%\author{Esfandiar Nava-Yazdani\inst{1}\orcidID{0000-0003-4895-739X} %\and
%Second Author\inst{2,3}\orcidID{1111-2222-3333-4444} \and
}
\authorrunning{E. Nava-Yazdani}
% First names are abbreviated in the running head.
% If there are more than two authors, 'et al.' is used.
%
\institute{Zuse Institute Berlin, Berlin, Germany %\and
%Springer Heidelberg, Tiergartenstr. 17, 69121 Heidelberg, Germany
\email{navayazdani@zib.de}\\
\url{https://www.zib.de/members/navayazdani}% \and
%ABC Institute, Rupert-Karls-University Heidelberg, Heidelberg, Germany\\
%\email{\{abc,lncs\}@uni-heidelberg.de}
}
\maketitle              % typeset the header of the contribution
\begin{abstract}
The square root velocity transformation is crucial for efficiently employing the elastic approach in functional and shape data analysis of curves. We study fundamental geometric properties of curves under this transformation. Moreover, utilizing natural geometric constructions, we employ the approach for intrinsic comparison within several classes of surfaces and augmented curves, which arise in the real world applications such as tubes, ruled surfaces spherical strips, protein molecules and hurricane tracks.

\keywords{Elastic shape analysis \and Tube \and Manifold-valued \and Ruled surface \and Hurricane track.}
\end{abstract}
\section{Introduction}
Metric comparison of curves is a core task in a wide range of application areas such as morphology, image and shape analysis, computer vision, action recognition and signal processing. Thereby, a Riemannian structure is highly desirable, since it naturally provides powerful tools, beneficial for such applications.     

In the recent years, the use of Riemannian metrics for the study of sequential data, such as shapes of curves, trajectories given as longitudinal data or time series, has rapidly grown. In elastic analysis of curves, one considers deformations caused from both bending and stretching. A Riemannian metric, which quantifies the amount of those deformations is called elastic (cf.~\cite{Mio2006OnSO,mio07plane}). Therein, in contrast to landmark-based approaches (cf.~\cite{Ken1999,nava2020geo,NavaYazdaniHegevonTycowicz2022}), one considers whole continuous curves instead of finite number of curve-points. Consequently, the underlying spaces are infinite dimensional and computational cost becomes a significant issue. The square root velocity (SRV) framework provides a convenient and numerically efficient approach for analysing curves via elastic metrics and has been widely used in the recent years (cf.~\cite{sri2010protein,huang2016riemannian,bbhm2018totgeo,bauer18relaxed} and the comprehensive work~\cite{sri2016functional}).

In many applications the curves are naturally manifold-valued. For instance, Lie groups such as the Euclidean motion group, or more generally, symmetric spaces including the Grassmannian and the Hadamard-Cartan manifold of positive definite matrices are widely used in modelling of real world applications. Extensions of SRV framework from euclidean to general manifold-valued data can be found in~\cite{le2017computing,zhang2018rate,homo,homo2018shape,homo2018comparing}.

Our contributions are the following. We expose for plane curves the behaviour of speed and curvature under the SRV transformation and geometric invariants. Moreover, we apply the elastic approach to augmented curves, determining certain classes of surfaces, tubes, ruled surfaces and spherical strips, as well as hurricane tracks considered with their intensities. We recall that with distance and geodesic at hand, significant ingredients of statistical analysis such as mean and principal geodesic components as well as approximation and modelling concepts such as splines can be computed.

This paper is organized as follows. Section \ref{sec:1}, presents the Riemannian setting and notations. Section \ref{sec:2} is devoted to applications. Therein, we consider time series, for which in addition to spatial data, auxiliary information give rise to augmented curves and some classes of surfaces generated by them. Thereby, we apply the elastic approach to both euclidean and spherical trajectories. Future prospects and concluding remarks are presented in \ref{sec:3}.

For the convenience of those readers primary interested in the applications, we mention that, advanced parts and details from differential geometry, presented in \ref{sec:1}, can be skipped. Thereby, the essential point is the use of a framework (SRV) for computation of shortest paths on the spaces of curves and their shapes. 
\section{Riemannian Framework}
\label{sec:1}
\subsection{Preliminaries}
For the background material on Riemannian geometry, we refer to \cite{doCarmo1992} and \cite{gallot}. Let $(M,g )$ be a finite dimensional Riemannian manifold and $\M$ the Fr{\'e}chet manifold of smooth immersed curves from $\Dx$ in $M$, where $\Dx$ denotes either the unit circle $S^1$ or the unit interval $I:=[0,1]$ for closed or open curves respectively. Moreover, we denote the group of orientation preserving diffeomorphisms on $\Dx$ by $\df$. The following reparametrization invariance is crucial for a Riemannian metric $G$ on $\M$: 
\[
G_{c\circ \vphi}(h\circ \vphi,k\circ\vphi)=G_c(h,k),
\]
for any $c\in \M$,\, $h,k\in T_c\M$ and $\vphi\in\df$. The above equivariance ensures that the induced distance function satisfies the following, which is often desirable in applications:
\[
d(c_0\circ\vphi,c_1\circ\vphi)=d(c_0,c_1),
\]
for any two curves $c_0$ and $c_1$ in $\M$. Similarly, denoting the isometry group of $M$ by $Isom(M)$ and the tangent map of $F\in Isom(M)$ by $TF$, the invariance
\[
G_{F\circ c}(TF\circ h,TF\circ k)=G_c(h,k),
\]
ensures that
\[
d(F\circ c_0,F\circ c_1)=d(c_0,c_1).
\]
With the above invariances, we can divide out the spaces $Isom(M)$ and $\df$, and consider the natural induced distance $d^S$ on the quotient space
\[
\Sx=\M/(\df\times Isom(M))
\]
given by
\begin{align*}
d^S([c_0],[c_1])&=inf\,\{d(c_0,f\circ c_1\circ\vphi):\, \vphi\in \df,\,f\in Isom(M)\}\\
&=inf\,\{d(f\circ c_0\circ\vphi,c_1):\, \vphi\in \df,\,f\in Isom(M)\}.
\end{align*}
In the context of shape analysis of curves, $\M$ and $\Sx$ are called the pre-shape and shape space, respectively. Note that the order of quotient operations does not matter, since the left action of $Isom(M)$ and the right action of $\df$ commute. $\M/\df$ is the space of unparametrized curves and its inherited distance reads 
\[
inf\,\{d(c_0,c_1\circ\vphi):\, \vphi\in \df\}.
\]
We remark that particular essential challenges are due to the fact that some basic concepts and results from finite dimensional differential geometry such as Hopf-Rinow theorem, do not carry over to the infinite dimensional case. Now, let $\nabla$ be the Levi-Civita connection of $M$ and denote the arc length parameter, speed and unit tangent of $c$ by $\theta$, $\omega$ and $T$ respectively. Thus, we have $\omega = |\dot{c}|$, $d\theta=\omega dt$ and $T=\frac{\dot{c}}{\omega}$, where dot stands for derivation with respect to the parameter $t$. 

Due to a remarkable result in ~\cite{michor2005vanishing} the geodesic distance induced by the simplest natural choice, the $L^2$-metric
\[
G_c^{L^2}(h,k)=\int_\Dx g_c( h,k) \,d\theta,
\]
always vanishes. Consequently, some stronger Sobolev metrics have been considered in several works including~ \cite{michor2007overview,bauer2013sobolev,bauer2014overview}. They are given by
\[
G_c(h,k)=\sum_{i=0}^n \int_\Dx a_ig_c( \nabla_T^ih, \nabla_T^ik)\, d\theta,
\]
with $a_1$ non-vanishing and all $a_i$ non-negative, distinguish the curves. We consider first order metrics with constant coefficients. We remark that the coefficients $a_i$ can be chosen such that the metric is scale invariant, which is a desired property for some applications in shape analysis. A family of certain weighted Sobolev-type metrics, the so-called elastic metrics, based on a decomposition of derivatives of the vector fields into normal and tangent components, has been introduced in ~\cite{Mio2006OnSO,mio07plane}:
\[
G_c^{a,b}(h,k)=\int_\Dx a g_c((\nabla_T h)^\top, (\nabla_T k)^\top) + b g_c((\nabla_T h)^\perp, (\nabla_T k)^\perp )\, d\theta,
\]
with $4b\geq a>0$. In this work, we use the square root velocity (SRV) framework, which allows for a convenient and computationally efficient elastic approach. The main tool in this framework is the square root velocity transformation, which for euclidean $M$ reads 
\[
q:\,c\mapsto\frac{\dot{c}}{\sqrt{|\dot{c}|}}.
\]
It isometrically maps curves modulo translations, with the metric $G^{1,1/4}$ to $\M$ with the flat $L^2$-metric given by 
\[
G^0(v,w)= \int_\Dx g( v(t),w(t)) dt.\] 
This metric is frequently called (cf. \cite{michor2007metric,Bauer2012ConstructingRI,bauer2020intrinsic}) flat, to emphasize its footpoint independence. Note that the elastic metric $G^{1,1}$ corresponds to the first order Sobolev metric with $a_0=0$ and $a_1=1$. We remark, that for plane curves, the work \cite{srv_extension} has extended the SRV transformation to general parameters $a,b>0$. For further reading on the SRV framework and applications in shape analysis, we refer to \cite{sri2010protein}, \cite{huang2016riemannian} (numerical aspects), the survey \cite{bauer2020intrinsic} and particularly, the comprehensive work \cite{sri2016functional}. 
%\begin{example}
%Elastica
%\end{example}
\subsection{Plane Curves}
A natural question that arises is, how essential geometric characteristics of a curve behave under the SRV transformation. In the following, we provide an answer for speed and curvature in the case of plane curves. Let $M=\R^2$, $\tilde{c}:=q(c)$ and denote the curvature of $c$ by $\kappa$. Note that $\tilde{c}$ does not need to be an immersion.
\begin{proposition}
Denoting the speed of $\tilde{c}$ by $\tilde{\omega}$, we have
\begin{equation}\label{eq:omega}
    \tilde{\omega}=\sqrt{\frac{\dot{\omega}^2}{4\omega}+\omega^3\kappa ^2}.
\end{equation}
Moreover, $\tilde{c}$ is an immersion if and only if $\kappa$ and $\dot{\omega}$ have no common zeros. In this case,
\begin{equation}\label{eq:kappa}
    \tilde{\kappa}\tilde{\omega}=\kappa\omega + \dot{\varphi},
\end{equation}
where $\tilde{\kappa}$ denotes the curvature of $\tilde{c}$ and
\[
\varphi:=\arctan\left(\frac{2\omega^2\kappa}{\dot{\omega}}\right).
\]
\end{proposition}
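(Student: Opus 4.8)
The plan is to work in the moving Frenet frame of $c$ and reduce everything to the turning angle. First I would write $\dot{c}=\omega T$, so that the SRV image becomes simply $\tilde{c}=\dot{c}/\sqrt{|\dot{c}|}=\sqrt{\omega}\,T$. Let $N$ denote the unit normal obtained by rotating $T$ through $\pi/2$, so that the planar Frenet equation, expressed in the parameter $t$ via $d\theta=\omega\,dt$, reads $\dot{T}=\omega\kappa N$. Differentiating $\tilde{c}=\sqrt{\omega}\,T$ then yields
\[
\dot{\tilde{c}}=\frac{\dot{\omega}}{2\sqrt{\omega}}\,T+\omega^{3/2}\kappa\,N .
\]

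Since $T$ and $N$ are orthonormal, \eqref{eq:omega} follows immediately by taking the norm: $\tilde{\omega}^2=|\dot{\tilde{c}}|^2=\frac{\dot{\omega}^2}{4\omega}+\omega^3\kappa^2$. The immersion criterion then drops out of the same identity: because $\omega>0$ and both summands under the root are non-negative, $\tilde{\omega}$ vanishes at a parameter value precisely when $\dot{\omega}$ and $\kappa$ vanish there simultaneously; hence $\tilde{c}$ is an immersion if and only if $\dot{\omega}$ and $\kappa$ have no common zero.

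For the curvature relation I would identify $\varphi$ as a rotation angle. The quotient of the $N$-component by the $T$-component in the display above is exactly $2\omega^2\kappa/\dot{\omega}$, so $\varphi=\arctan(2\omega^2\kappa/\dot{\omega})$ is the angle between $\dot{\tilde{c}}$ and $T$; equivalently the unit tangent of $\tilde{c}$ is $\tilde{T}=\cos\varphi\,T+\sin\varphi\,N$. Writing $\alpha$ for the turning angle of $c$, so that $T=(\cos\alpha,\sin\alpha)$ and $\dot{\alpha}=\omega\kappa$, a short computation shows that $\tilde{T}=(\cos(\alpha+\varphi),\sin(\alpha+\varphi))$, i.e.\ the turning angle of $\tilde{c}$ is $\tilde{\alpha}=\alpha+\varphi$. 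Since for a plane curve the curvature is the rate of change of the turning angle with respect to arc length, $\tilde{\kappa}=d\tilde{\alpha}/d\tilde{\theta}$ with $d\tilde{\theta}=\tilde{\omega}\,dt$, differentiating in $t$ gives $\tilde{\kappa}\tilde{\omega}=\dot{\tilde{\alpha}}=\dot{\alpha}+\dot{\varphi}=\omega\kappa+\dot{\varphi}$, which is \eqref{eq:kappa}.

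The main obstacle is a regularity point hidden in the last step: the expression $\varphi=\arctan(2\omega^2\kappa/\dot{\omega})$ is only valid modulo the branch ambiguity of $\arctan$ and becomes singular exactly where $\dot{\omega}=0$. At such points the immersion hypothesis forces $\kappa\neq 0$, so $\varphi\to\pm\pi/2$ and the rotation angle $\tilde{\alpha}-\alpha$ extends smoothly across them even though the explicit arctan formula does not. I would therefore justify that $\varphi$, read as the genuine angle between $\tilde{T}$ and $T$, is a smooth function whose derivative agrees with that of the arctan expression wherever the latter is defined, so that $\dot{\varphi}$ in \eqref{eq:kappa} is unambiguous. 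Everything else is a direct computation in the orthonormal frame $(T,N)$.
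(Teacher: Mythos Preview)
Your argument is correct and, for the first half (the formula for $\tilde{\omega}$ and the immersion criterion), essentially identical to the paper's: both compute $\dot{\tilde{c}}$ in the Frenet frame $(T,N)$ and read off the norm.

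For the curvature formula the two proofs diverge. The paper proceeds by brute force: it also computes $\ddot{\tilde{c}}$ in the $(T,N)$ frame, plugs both derivatives into the determinant formula $\tilde{\kappa}\tilde{\omega}^3=\det(\dot{\tilde{c}},\ddot{\tilde{c}})$, and then recognizes the resulting expression $(\dot{\alpha}\dot{\beta}-\ddot{\alpha}\beta)/\tilde{\omega}^2$ (with $\alpha=\sqrt{\omega}$, $\beta=\omega^{3/2}\kappa$) as $\dot{\varphi}$. You instead interpret $\varphi$ geometrically as the angle from $T$ to $\tilde{T}$, so that the turning angle of $\tilde{c}$ is the turning angle of $c$ plus $\varphi$, and \eqref{eq:kappa} is then immediate from the characterization of plane curvature as the arc-length derivative of the turning angle. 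Your route is shorter and more conceptual, and it explains \emph{why} the arctan appears, whereas in the paper it emerges only after simplifying a second-order expression. You also flag the branch issue of $\arctan$ at zeros of $\dot{\omega}$, which the paper does not mention; your resolution via the smooth rotation angle $\tilde{\alpha}-\alpha$ is the right way to handle it.
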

\begin{proof}
%Clearly, away from common zeros of $\kappa$ and $\dot{\omega}$, $\cc$ is an immersion.
Let $N$ denote the unit normal of $c$. With the shorthand notations $\alpha:=\sqrt{\omega}$ and $\beta:=\alpha^3 \kappa$, a straightforward application of the Frenet equations $\dot{T}=\omega\kappa N$ and $\dot{N}=-\omega\kappa T$, yields
\begin{align*}
\dot{\cc} &= \dot{\alpha} T+ \beta N,\\
\ddot{\cc}&= (\ddot{\alpha} - \frac{\beta^2}{\alpha})T+(\dot{\beta}+\frac{\dot{\alpha}\beta}{\alpha})N.
\end{align*}
Thus, we have 
\[
\tilde{\omega}=\sqrt{\dot{\alpha}^2+\beta^2},
\]
immediately implying \eqref{eq:omega}. Obviously, zeros of $\tilde{\omega}$ are common zeros of $\kappa$ and $\dot{\omega}$. Thus, $\tilde{c}$ is an immersion if and only if $\kappa$ and $\dot{\omega}$ have no common zeros. In this case, $\tilde{\kappa}$ and $\varphi=\arctan\left(\beta/\dot{\alpha}\right)=\arctan\left(\frac{2\omega^2\kappa}{\dot{\omega}}\right)$ are well-defined and 
\[
\tilde{\kappa}\tilde{\omega}^3=\tilde{\omega}^2\beta/\alpha +\dot{\alpha}\dot{\beta}-\ddot{\alpha}\beta,
\]
which immediately implies the curvature formula \eqref{eq:kappa}.
\end{proof}
Next, we apply the proposition to study some geometric quantities, which are invariant under the SRV transformation. For closed curves, integrating the curvature formula above over $\Dx=S^1$ (note that in this case, $\tilde{\omega}>0$ almost everywhere), we see that the SRV transformation preserves the total curvature and particularly the turning number. Moreover, $\kappa\omega$ is preserved if and only if $\kappa=a\frac{d}{dt}\left(\frac{1}{\omega}\right)$ with a constant $a$.

Clearly, with $\kappa$ and $\omega$ at hand, utilizing Frenet equations, we can compute $c$ up to rigid motions. The following explicit solution is an immediate application of the above proposition. In light of the above proposition, immersed curves, which are mapped to straight lines, can easily be determined as follows.
\begin{example}
Let $a,b,A$ be constants with $ab,\,A>0$, $\omega (t)=A/\sin^2(at+b)$ and $\kappa =a/\omega$. A straightforward computation, utilizing the curvature formula \eqref{eq:kappa}, implies $\tilde{\kappa}=0$.
\end{example}
\subsection{Curves in Homogeneous Spaces}
\label{subsec:1.3}
For the background material on Lie groups and homogeneous spaces, we refer to \cite{gallot}. The works~\cite{le2017computing,zhang2018rate} provide extensions of the SRV framework for euclidean curves to the case of general manifolds. The former has high computational cost, while the latter, transported SRV, depends on a reference point and also suffers from distortion or bias caused by holonomy effects. We use the natural extension to homogeneous spaces exposed in \cite{homo2018comparing,homo2018shape}. For reader's convenience, we sketch the core ingredients of the approach and refer to the mentioned works for details and some applications. 

Let $M$ be a homogeneous space, i.e., $M=H/K$, where $K$ is a closed Lie subgroup of a Lie Group $H$. Let $\|\cdot\|$ denote the induced norm by a 
left invariant metric on $H$, $L$ the tangent map of the left translation, and $Imm(\Dx, H)$ the space of immersed curves from $\Dx$ to $H$. The SRV transformation is given by $Q(\alpha)=(\alpha (0), q(\alpha))$, where 
\[
q(\alpha)=
\frac{L_{\alpha^{-1}}\dot{\alpha}}{\sqrt{ \|\dot{\alpha}\|}}
\]
Here, $\alpha^{-1}(t)$ denotes the inverse element of $\alpha (t)$ in $H$ and $\mathcal{H}$ the Lie algebra of $H$. The map $Q$ is a bijection from $Imm(\Dx, H)$ onto $H\times L^2(\Dx,\mathcal{H})$. Now, $\M$ can be equipped with the Riemannian metric given by the pullback of the product metric of $H\times L^2(\Dx,\mathcal{H})$ using the map $Q$ and horizontal lifting. Let $c_1$ and $c_2$ be immersed curves in $M$ with horizontal lifts $\alpha_1$ and $\alpha_2$ respectively. The induced distance on $\M$ reads
\[
d(c_1,c_2) =inf \left\{ \sqrt{ d_H^2(\alpha_1(0),\alpha_2(0)x)+\| q(\alpha_1)-Ad_{x^{-1}}(q(\alpha_2) \|_{L^2}^2 } :\: x\in K \right\}.
\]
\section{Applications}
\label{sec:2}
Frequently, besides spatiotemporal data, represented by a curve $\ga$ in a manifold $M$, there are additional or auxiliary information associated with the curve, thus with the same time-correspondence. These can jointly with $\ga$ be comprised and represented as a so-called augmented curve $\tilde{\ga}$ in a higher dimensional manifold $\tilde{M}$. In some applications, the curve $\tilde{\ga}$ uniquely determines a submanifold $N$ of $M$ via a natural construction. An important example is provided, when $\tilde{M}$ is a submanifold of the tangent bundle of $M$, where the auxiliary information is represented as a vector field along $\ga$ and the construction is given by the Riemannian exponential map. Significant special cases occur, when $M$ is $\R^3$ or the unit two-sphere $S^2$ and $N$ a surface. In the next two subsections, we consider certain classes of surfaces in $\R^3$, which often arise in applications and are determined by augmented curves in $\R^4$. In the last two subsections, we consider certain spherical regions as well as hurricane tracks together with their intensities. In both cases, we utilize the Riemannian distance from subsection \ref{subsec:1.3} to $S^2\times \R$, which is a homogeneous space (recall that $S^2$ can be identified with $SO(3)/SO(2)$).

For our example applications, we present geodesic paths representing deformations, minimizing the elastic energy within the SRV framework. We remark, that in a Riemannian setting, distance and geodesics are essential Building blocks for many major issues in the morphology and shape analysis, such as computation of mean and test statistics as well as principal component or geodesic analysis. Moreover, besides statistical analysis, also some methods for clustering and classification use Riemannian metrics and geodesics. 

For the code implementing our approach, which particularly includes Riemannian optimization for the computation of geodesic paths, we utilized our publicly available python package \url{https://github.com/morphomatics}, introduced in \cite{morphomatics}.     
\subsection{Tubes}
A tube or canal surface $c$ is a one-parameter family of circles, whose centers constitute a regular curve $\ga$ such that the circles are perpendicular to $\ga$. More precisely, denoting the radii of the circles by $r$, 
\[
c(s, .)=\ga+r(N\cos s+B\sin s),\, 0\leq s\leq 2\pi,
\]
where $N$ and $B$ are the normal and binormal of the curve $\ga=\ga (t),\,t\in\Dx$, resp. Due to the unique correspondence of $c$ to $(\ga, r)$, comparison of tubes reduces to comparison of curves in $\R^4$. Figure \ref{fig:tube} shows some examples of shortest paths of tubes.  Real world applications include a variety of fields such as examination of vein, pipes, capsules and plant roots. Clearly, tubes include surfaces of revolution.
\begin{figure}[h]
    \vspace{-2ex}
    \centering
    \includegraphics[width=.8\linewidth]{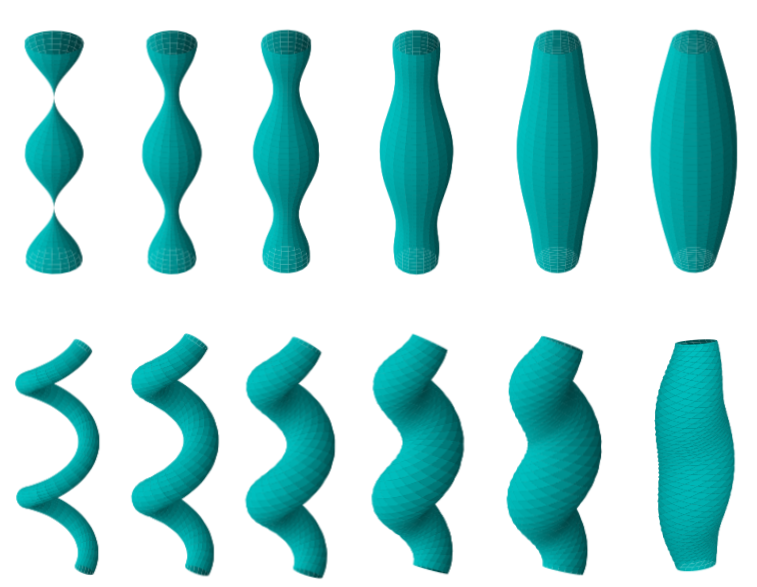}
    \caption{Two shortest paths of tubes}
    \label{fig:tube}
\end{figure}
\subsection{Ruled Surfaces}
A ruled surface is formed by moving a straight line segment (possibly with varying length) along a base curve. More precisely, let $\ga$ be a curve in $\R^3$ and $v$ a unit vector field along $\ga$. Then 
\[
c(s,.)=\ga +sv,\,s\in I,
\] 
parametrizes a ruled surface generated by $(\ga,v)$. Figure \ref{fig:ruled} depicts an example, where each surface consists of straight line segments connecting the blue (for better visibility) curves $\ga$ and $\ga+v$. The class of ruled surfaces includes many prominent surfaces such as cone, cylinder, helicoid (a minimal surface) and M\" obius strip. They arise in manufacturing (construction by bending a flat sheet), cartography, architecture and biochemistry (secondary and tertiary structure of protein molecules). 
\begin{figure}[htb]
    \centering
    \includegraphics[width=.8\linewidth]{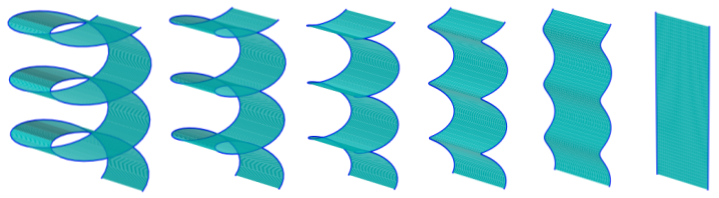}
    \caption{Shortest path of ruled surfaces}
    \label{fig:ruled}
\end{figure}
\subsection{Spherical Strips}
Let $\exp$ denote the exponential map of the unit two-sphere $S^2$. We recall that for any non-zero tangent vector to $S^2$ at a point $x$:
\[
\exp_x(v)=\cos(|v|)x+\sin(|v|)\frac{v}{|v|}
\]
and $\exp_x(0)=x$. Now, let $\ga$ be a curve in $S^2$ with binormal $B$ (cross product of $\ga$ and its unit tangent), and $r$ a scalar function along $\ga$. Then, the map $c$ given by
\[
c(s,.):=\exp_\ga s(rB),\, s\in I,
\]
parametrizes a spherical strip with bandwidth $r$. Figure \ref{fig:strip} depicts an example of the shortest path between two spherical curves comprised with their bandwidth functions visualised as strips.
\begin{figure}[h]
    \centering
    \includegraphics[width=1\linewidth]{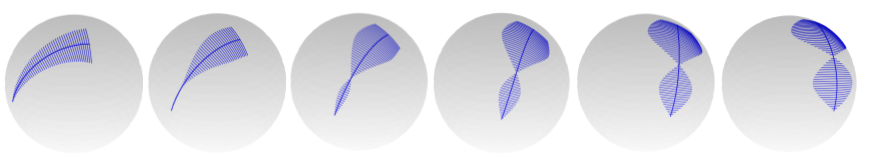}
    \vspace{-4ex}
    \caption{Shortest path of spherical strips}
    \label{fig:strip}
\end{figure}
\vspace{-4ex}
\subsection{Hurricane Tracks}
Hurricanes belong to the most extreme natural phenomena and can cause major impacts regarding environment, economy, etc. Intensity of a hurricane is determined by the maximum sustained wind (maxwind), monotonically classifying the storms into categories (due to Saffir–Simpson wind scale; for instance, maxwind $\geq 137$ knots corresponds to category 5). 
Due to their major impacts on economy, human life and environment, as well as extreme variability and complexity, hurricanes have been studies in a large number of works. For our example, we used the HURDAT 2 database provided by the U.S. National Oceanic and Atmospheric Administration publicly available on \url{https://www.nhc.noaa.gov/data/}, supplying latitude, longitude, and maxwind on a 6 hours base of Atlantic hurricanes.

\begin{figure}[htb]
    \centering
    \includegraphics[width=.96\linewidth]{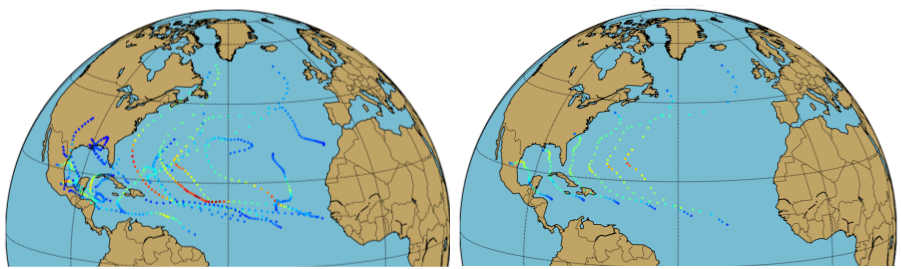}
    \includegraphics[width=.028\linewidth]{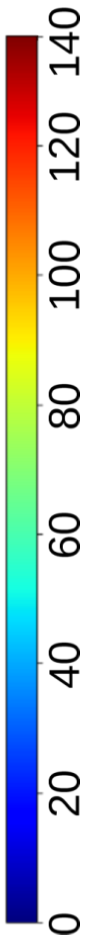}
    \vspace{-2ex}
    \caption{2010 Atlantic hurricane tracks (left) and the shortest path between two of them (right) with color-coded maximum sustained wind (in knots)}
    \label{fig:hur}
\end{figure}
We represent the tracks as discrete trajectories in $S^2$. For further details and comparison with other approaches, we refer to \cite{sri2016functional,homo} and the recent work~\cite{nava2023hur}. The latter, also provides statistical analysis and a classification of hurricane tracks in terms of their intensities. Fig.~\ref{fig:hur} illustrates this data set with a visualization of the 2010 hurricane tracks and a shortest path, where the intensities, considered as auxiliary information, are color-marked.
\section{Conclusion}
In this paper, we analysed the behaviour of speed and curvature under the square root velocity framework for elastic approach to plane curves. Moreover, we applied an extension of this framework to homogeneous Spaces, to metrically compare augmented curves and special surfaces, generated by those curves, using a natural construction via the Riemannian exponential map. Our approach, allows for computationally efficient determination of geodesic paths in the shape spaces of the respective classes of surfaces. Our example applications include tubes, ruled surfaces, spherical strips and hurricane tracks. Future work includes further real world applications, particularly concerning statistical analysis of longitudinal data such as comparison of group wise trends within a hierarchical model as well as classification and prediction. 
\label{sec:3}
\subsubsection{Acknowledgements}
This work was supported through the German Research Foundation (DFG) via individual funding (project ID 499571814).
%
% ---- Bibliography ----
%
% BibTeX users should specify bibliography style 'splncs04'.
% References will then be sorted and formatted in the correct style.
%
\bibliographystyle{splncs04}
\bibliography{lit}
\end{document}